\newtheorem*{rep@theorem}{\rep@title}
\newcommand{\newreptheorem}[2]{%
\newenvironment{rep#1}[1]{%
\def\rep@title{#2 \ref{##1}}%
\begin{rep@theorem}}%
{\end{rep@theorem}}}
\newtheorem{prop}{Proposition}
\newtheorem{theorem}{Theorem}
\newtheorem{corollary}{Corollary}
\newtheorem{lemma}{Lemma}
\theoremstyle{definition}
\newcommand{\B}{\mathcal{B}}
\newcommand{\Z}{\mathbb{Z}}
\newcommand{\C}{\mathcal{C}}
\newcommand{\x}{\times}
\newcommand{\link}{L=K_1~\cup~\dots~\cup~K_m}
\newcommand{\I}{[0,1]}
\newcommand{\bd}{\partial}
\newcommand{\F}{\mathcal{F}}
\newcommand{\inv}{^{-1}}
\begin{document}
\title{Splittings of Link Concordance Groups}

\author{Taylor E. Martin}
\address{Department of Mathematics, Sam Houston University}
\email{taylor.martin@shsu.edu}

\author{Carolyn A. Otto}
\address{Department of Mathematics, University of Wisconsin--Eau Claire}
\email{ottoa@uwec.edu}

\date{\today}

\subjclass[2000]{57M25}

\begin{abstract}
We establish several results about two short exact sequences involving lower terms of the $n$-solvable filtration, $\{\F^m_n\}$ of the string link concordance group $C^m$. We utilize the Thom-Pontryagin construction to show that the Sato-Levine invariants $\bar{\mu}_{(iijj)}$ must vanish for 0.5-solvable links. Using this result, we show that the short exact sequence $0\rightarrow \F^m_0/\F^m_{0.5} \rightarrow \F^m_{-0.5}/\F^m_{0.5} \rightarrow \F^m_{-0.5}/\F^m_0 \rightarrow 0$ does not split for links of two or more components, in contrast to the fact that it splits for knots.  Considering lower terms of the filtration $\{\F^m_n\}$ in the short exact sequence $0\rightarrow \F^m_{-0.5}/\F^m_{0} \rightarrow \C^m/\F^m_{0} \rightarrow \C^m/\F^m_{-0.5} \rightarrow 0$, we show that while the sequence does not split for $m\ge 3$, it does indeed split for $m=2$.  This allows us to determine that the quotient $\C^2/\F^2_0 \cong \Z_2\oplus \Z_2\oplus\Z_2 \oplus \Z$.

\end{abstract}

\maketitle

\section{Introduction}
 


 In 1966, Fox and Milnor showed that concordance classes of knots form an abelian group $\C$, known as the knot concordance group.  To investigate the structure of this group, Cochran, Orr and Teichner defined in \cite{COT} the $n$-solvable filtration,$\{\mathcal{F}_n\}$,
$$\{0\} \subset \cdots \subset \mathcal{F}_{n+1} \subset \mathcal{F}_{n.5} \subset \mathcal{F}_n \subset \cdots \subset \mathcal{F}_{0.5} \subset \mathcal{F}_0 \subset \mathcal{C}.$$  

In this paper, we focus our attention to the more general (string) link concordance group, $\C^m$. The $n$-solvable filtration given by Cochran, Orr, and Teichner can be similarly defined to give the $n$-solvable filtration of $\C^m$. A successful tool in studying the structure of $\C^m$ has been in analyzing the successive quotients of the this filtration. For example, Harvey showed in ~\cite{H} that $\F^m_n/\F^m_{n+1}$ is a nontrivial group that contains an infinitely generated subgroup generated by boundary links.  Harvey and Cochran generalized this result, showing that $\F^m_n/\F^m_{n.5}$ also contains an infinitely generated subgroup ~\cite{CH}.  The second author showed the quotients $ \F^m_{n.5}/\F^m_{n+1}$ are also nontrivial by showing that they contain an infinite cyclic subgroup generated by iterated Bing doubles, \cite{Otto}. Cha, Harvey, Cochran, and Leidy have given significant contributions to this area as found in ~\cite{CHL}, ~\cite{Cha}. The first author gave a characterization of 0-solvable links, $\F^m_0$ in ~\cite{Martin} that will be of significant use in this paper.

The main objective of this paper is to investigate lower order quotients of the $n$-solvable filtration of $\C^m$ in order to learn more about their structure. In order to achieve this, we consider several short exact sequences and determine the conditions on the number of components of a link that will result in a splitting of these sequences.

In section \ref{section:SES1}, we consider the short exact sequence  $0\rightarrow \F^m_0/\F^m_{0.5} \rightarrow \F^m_{-0.5}/\F^m_{0.5} \rightarrow \F^m_{-0.5}/\F^m_0 \rightarrow 0$, where the notation $\F^m_{-0.5}$ refers to the class of $m$ component links with vanishing pairwise linking numbers.  While it is known this sequence splits for knots, we show that it does not split for links with two or more components.

\begin{reptheorem}{thm:SES1}The short exact sequence $0\rightarrow \F^m_0/\F^m_{0.5} \rightarrow \F^m_{-0.5}/\F^m_{0.5} \rightarrow \F^m_{-0.5}/\F^m_0 \rightarrow 0$ does not split for $m \geq 2$.
\end{reptheorem}

In section \ref{section:SES2}, we consider another short exact sequence with slightly lower order terms of the $n$-solvable filtration, $0\rightarrow \F^m_{-0.5}/\F^m_{0} \rightarrow \C^m/\F^m_{0} \rightarrow \C^m/\F^m_{-0.5} \rightarrow 0$.  When considering $m=2$, we show that this sequence splits. 

\begin{repproposition}{prop:SES2}
The short exact sequence $0\rightarrow \F^2_{-0.5}/\F^2_{0} 
{\rightarrow} \C^2/\F^2_{0} 
{\rightarrow} \C^2/\F^2_{-0.5} \rightarrow 0$ splits.
\end{repproposition}

By showing the above sequence splits, we can write $\C^2/\F^2_{0}$ as a direct product.  We then can use this splitting to prove that $\C^2/\F^2_{0}$ is an abelian group by considering commutators in this quotient. Moreover, we obtain the following corollary characterizing $\C^2/\F^2_0$.

\begin{reptheorem}{thm:ab}
The quotient $\C^2/\F^2_0 $ is abelian.
\end{reptheorem}

\begin{repcorollary}{corollary:structure}
The quotient $\C^2/\F^2_0 \cong \Z_2 \oplus \Z_2 \oplus \Z_2 \oplus \Z$.
\end{repcorollary}

Finally we demonstrate that when links have three or more components, the short exact sequence does not split and the structure of $\C^m/\F^m_0$ remains not fully known.

\begin{reptheorem}{thm:SES2nosplit}
For $m \ge 3$, the short exact sequence $0\rightarrow \F^m_{-0.5}/\F^m_{0} 
{\rightarrow} \C^m/\F^m_{0} 
{\rightarrow} \C^m/\F^m_{-0.5} \rightarrow 0$ does not split.
\end{reptheorem}

\section{Preliminaries}

An \textit{$m$-component link} is an embedding $\coprod_m S^1 \hookrightarrow S^3$. In order to define a group structure on concordance classes of links, it is necessary to define string links. Let $D$ be the unit disk, $I$ the unit interval and $\{p_1, p_2, \ldots, p_m\}$ be $m$ points in the interior of $D$.  An \textit{$m$-component string link} is a smooth proper embedding $\sigma : \coprod_{i=1}^m I_i \rightarrow D \times I$ such that $\sigma |_{I_i}(0)=\{p_i\} \times \{0\}$ and $\sigma |_{I_i}(1)=\{p_i\} \times \{1\}.$   
Two $m$-component string links $\sigma_1$ and $ \sigma_2$ are \textit{concordant} if there exists a smooth embedding
$H:\coprod_m (I \times I) \rightarrow B^3 \times I$ that is transverse to the boundary and such that
$H|_{\coprod_m I\times \{0\}}=\sigma_1$, $H|_{\coprod_m I\times \{1\}}=\sigma_2$, and $H|_{\coprod_m \partial I\times I}=j_0 \times id_I$ where $j_0: \coprod_m \partial I \rightarrow S^2$.

The concordance classes of $m$-component string links form a group, denoted $\mathcal{C}^m$, and is known as the \textit{string link concordance group} under the operation of stacking.  The identity class of this group is the class of slice string links, those concordant to the trivial string link. The inverses are the string links obtained by reflecting the string link about $D \times \{1/2\}$ and reversing the orientation.  When $m=1$, $\mathcal{C}^m$ is the knot concordance group. Every link has a string link representative. In other words, given any link, $L$ in $S^3$, there exists a string link $\sigma$ such that $\hat \sigma$ is isotopic to $L$~\cite{HL1}.

Cochran, Orr, and Teichner \cite{COT} defined the condition of $n$-solvability for links. An ordered, oriented $m$-component link $L$ is $n$\textit{-solvable} if the zero-framed surgery, $M_L$, bounds a compact, smooth 4-manifold, $W^4$, such that the following conditions hold:
\begin{itemize}
\item[i)]  $H_1(M_L; \mathbb{Z}) \cong \mathbb{Z}^m$ and $H_1(M_L) \rightarrow H_1(W; \mathbb{Z})$ is an isomorphism induced by the inclusion map;
\item[ii)] $H_2(W; \mathbb{Z})$ has a basis consisting of connected, compact, oriented surfaces, $\{L_i, D_i\}_{i=1}^r$, embedded in $W$ with trivial normal bundles, where the surfaces are pairwise disjoint except that, for each $i$, $L_i$ intersects $D_i$ transversely once with positive sign;
\item[iii)] For all $i\text{, } \pi_1(L_i) \subset \pi_1(W)^{(n)}$ and  $\pi_1(D_i) \subset \pi_1(W)^{(n)}$ where $\pi_1(W)^{(n)}$ is the $n^{th}$ term of the derived series.  The derived series of a group $G$, denoted $G^{(n)}$ is defined recursively by $G^{(0)}:=G$ and $G^{(i)}:=[G^{(i-1)},G^{(i-1)}]$.
\end{itemize}

The 4-manifold $W$ is called an $n$-solution for L. We consider a string link to be $n$-solvable if its closure is an $n$-solvable link. 
A link is $n.5$-solvable if, in addition to the above definition, $ \pi_1(L_i) \subset \pi_1(W)^{(n+1)}$ for all $i$.  In this instance, $W$ is called an $n.5$-solution for $L$.

The notation $\F^m_n$ is used to denote the set of concordance classes of $n$-solvable $m$-component links. For convenience, we denote $\F^m_{-0.5}$ to be the set of $m$ component links with vanishing pairwise linking numbers. The notion of $n$-solvability for string links gives rise to the $n$-solvable filtration, $\{\mathcal{F}^m_n\}$, of $\C^m$. 
$$\{0\} \subset \cdots \subset \mathcal{F}^m_{n+1} \subset \mathcal{F}^m_{n.5} \subset \mathcal{F}^m_n \subset \cdots \subset \mathcal{F}^m_{0.5} \subset \mathcal{F}^m_0 \subset \F^m_{-0.5} \subset \mathcal{C}^m.$$

In the 1950’s, Milnor defined a classical family of link invariants called $\bar \mu$̄-invariants ~\cite{Mil1}, ~\cite{Mil2}. Let $L$ be an ordered, oriented $m$-component link.  Milnor invariants are denoted $\bar \mu_{(I)}(L)$ where $I=i_1i_2\dots i_k$ is a word composed of integers: $i_j \in \{1, 2, \dots, m\}$.  The index $i_j$ refers to the $j^{th}$ component of $L$.  We say that both $I$ and $\bar \mu_{(I)}(L)$ have length $k$.

There is some indeterminacy of these invariants due to the fact that there is a choice for the meridians of the link.  Thus these invariants are not true link invariants.  However, Habegger and Lin showed that this indeterminacy corresponds to exactly to the choice of ways of representing a link as a closure of a string link ~\cite{HL1}.  Thus Milnor's invariants are string link invariants and are also concordance invariants ~\cite{Casson}.

In this paper will we often utilize Milnor's $\bar \mu$-invariants, specifically the Sato-Levine invariant, $\bar \mu_{(iijj)}(L)$ and the triple linking number, $\bar \mu_{(ijk)}(L)$. There are many ways of defining Milnor’s invariants, but these specific two invariants can be thought of as higher order cup products and we give their definitions geometrically.  Let $L=K_1 \cup K_2 \cup \cdots \cup K_m$.  Every 2-component sublink $K_i \cup K_j$ gives a Sato-Levine invariant of $\bar \mu_{(iijj)}(L)$ of $L$. We can compute this invariant by considering oriented Seifert surfaces $\Sigma_i$ and $\Sigma_j$ for $K_i$ and $K_j$ in the exterior of the link. We may choose these surfaces in such a way that $\Sigma_i \cap \Sigma_j=\gamma \cong S^1$ ~\cite{Tim}. Then, we push the curve $\gamma$ off of one of the surfaces $\Sigma_i$ in the positive normal direction to obtain a new curve $\gamma^+$. We define the Sato-Levine invariant to be $lk(\gamma, \gamma^+)$. This is a well-defined link invariant provided that the pairwise linking numbers of $L$ all vanish, and it is equal to the Milnor's invariant $\bar \mu_{(iijj)}(L)$ ~\cite{Tim}.

To compute the Milnor's invariant $\bar \mu_{(ijk)}(L)$, we first consider the sublink $J = K_i \cup K_j \cup K_k$ of the link $L$.  Let $\Sigma_i$,$\Sigma_j$, and $\Sigma_k$ be oriented Seifert surfaces in the link exterior for $K_i$, $K_j$, and $K_k$, respectively. The intersection $\Sigma_i\cap \Sigma_j \cap \Sigma_k$ is a collection of points which are given an orientation induced by the outward normal on each Seifert surface. The count of these points up to sign is well-defined when the pairwise linking numbers of $L$ vanish; this count gives us $\bar \mu_{(ijk)}(L)$ ~\cite{Tim}.

\section{The sequence $0\rightarrow \F^m_0/\F^m_{0.5} \rightarrow \F^m_{-0.5}/\F^m_{0.5} \rightarrow \F^m_{-0.5}/\F^m_0 \rightarrow 0$}\label{section:SES1}

We will begin by considering the following short exact sequence of quotients of the $n$-solvable filtration. 
 \begin{equation}\label{eq:SES1}
0\rightarrow \F^m_0/\F^m_{0.5} \stackrel{f}{\rightarrow} \F^m_{-0.5}/\F^m_{0.5} \stackrel{g}{\rightarrow} \F^m_{-0.5}/\F^m_0 \rightarrow 0.
\end{equation} 

It is known that this sequence splits in the knot concordance group; we recreate the argument here. Recall that $\F^m_{-0.5}$ denotes the class of $m$ component links with vanishing pairwise linking numbers. In the knot concordance group, we have that $\F_{-0.5}$  is itself the whole group $\C$. Thus, here we consider the following sequence.
\begin{displaymath}
0\rightarrow \mathcal{F}_0/\mathcal{F}_{0.5} \rightarrow \mathcal{C}/\mathcal{F}_{0.5} \stackrel{g}{\rightarrow} \mathcal{C}/\mathcal{F}_0 \rightarrow 0.
\end{displaymath}
 
This sequence splits as we can define a section $s:\C/\F_0 \rightarrow \C/\F_{0.5}$ such that $g \circ s=id$. The quotient $\C/\F_0 \cong \Z_2$ is given by the Arf invariant and generated by the figure eight knot $4_1$, with Arf invariant 1. The subgroup $\F_{0.5}$ is the subgroup of algebraically slice knots \cite{COT}. As the figure eight knot is amphichiral, it has order two in the knot concordance group. Thus, the inclusion map gives a right splitting of the short exact sequence. 

When considering the sequence (\ref{eq:SES1}) for links of two or more components, we show that no analogous section exists. 

\begin{theorem}\label{thm:SES1}
The short exact sequence $0\rightarrow \F^m_0/\F^m_{0.5} \rightarrow \F^m_{-0.5}/\F^m_{0.5} \rightarrow \F^m_{-0.5}/\F^m_0 \rightarrow 0$ does not split for $m \geq 2$.
\end{theorem}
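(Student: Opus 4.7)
The plan is to assume for contradiction that a section $s\colon \F^m_{-0.5}/\F^m_{0}\to \F^m_{-0.5}/\F^m_{0.5}$ of $g$ exists, and then force a contradiction using the Sato-Levine vanishing theorem (proved earlier in this section via Thom-Pontryagin) together with the first author's classification of $\F^m_0$ from \cite{Martin}. The witness link will be the Whitehead link $L=\mathrm{Wh}$ when $m=2$, and $L=\mathrm{Wh}\sqcup U_3\sqcup\cdots\sqcup U_m$ (split unknotted components adjoined) when $m\ge 3$; since splitting off unknotted components preserves every invariant entering the argument, it essentially suffices to handle $m=2$.

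I would first record the two needed properties of $L$. A direct Seifert-surface computation gives $\bar{\mu}_{(1122)}(L)=\pm 1$ together with vanishing pairwise linking numbers, so $L\in\F^m_{-0.5}$. Next I would invoke Martin's characterization of $\F^m_0$ inside $\F^m_{-0.5}$, which in addition to vanishing component Arf invariants imposes the parity condition $\bar{\mu}_{(iijj)}\equiv 0\pmod 2$ (this is the source of the third $\Z_2$ summand in the later decomposition $\C^2/\F^2_0\cong\Z_2\oplus\Z_2\oplus\Z_2\oplus\Z$). By that parity, $L\notin\F^m_0$; however $2L$ satisfies all required vanishing, since its component Arf invariants remain zero and $\bar{\mu}_{(iijj)}(2L)=2\bar{\mu}_{(iijj)}(L)$ is even. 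Therefore $[L]$ has order exactly $2$ in $\F^m_{-0.5}/\F^m_0$.

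Now assume the section $s$ exists. Homomorphicity of $s$ forces $s([L])$ to have order exactly $2$ in $\F^m_{-0.5}/\F^m_{0.5}$: it must be nonzero (since $g$ sends it to $[L]\ne 0$) and doubled to zero. Any representative is of the form $L'=L\cdot J$ for some $J\in\F^m_0$, using string link composition, because $g([L'])=[L]$. Then $2L'\in\F^m_{0.5}$, so Sato-Levine vanishing together with additivity of $\bar{\mu}_{(1122)}$ under stacking yields
\[
0 \;=\; \bar{\mu}_{(1122)}(2L') \;=\; 2\bar{\mu}_{(1122)}(L) + 2\bar{\mu}_{(1122)}(J),
\]
whence $\bar{\mu}_{(1122)}(J)=\mp 1$ is odd, contradicting the parity $\bar{\mu}_{(iijj)}(\F^m_0)\subseteq 2\Z$.

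The main obstacle is precisely extracting this parity constraint from \cite{Martin}: without it, for any given $L$ a compensating $J\in\F^m_0$ could be chosen to cancel $\bar{\mu}_{(1122)}(L)$ entirely, causing the obstruction to evaporate. All the real content of the non-splitting statement therefore lives in Martin's description of $\F^m_0$; once that is correctly applied, the contradiction above is routine, and the extension from $m=2$ to $m\ge 3$ is automatic because the extra unknotted strands contribute nothing to any Arf invariant, any pairwise linking number, or any Sato-Levine invariant $\bar{\mu}_{(iijj)}$ that is used.
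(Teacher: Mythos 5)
Your argument is correct and follows essentially the same route as the paper: take the Whitehead string link $L$, observe that any section would force $2s([L])$ into $\F^m_{0.5}$, and apply the Sato--Levine vanishing lemma to derive a contradiction. You are, if anything, more careful than the published proof, which loosely states that $g(s(L))$ is a string link ``concordant to $L$'' (when in fact a representative of $s([L])$ only agrees with $L$ modulo $\F^m_0$) and does not explicitly invoke the parity constraint $\bar{\mu}_{(iijj)}(\F^m_0)\subseteq 2\Z$ from Martin's classification---precisely the point you correctly flag as carrying the real content of the non-splitting.
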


To prove Theorem \ref{thm:SES1}, we rely on the following lemma showing that the Sato-Levine invariants of a $0.5$-solvable link vanish. 

\begin{lemma}\label{05solvable}
For an ordered, oriented, $m$-component, 0.5-solvable link $\link$, and any $1\le i<j\le m$, the Sato-Levine invariant $\bar{\mu}_{(iijj)}(L) = 0$.
\end{lemma}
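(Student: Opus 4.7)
The plan is to realize $\bar\mu_{(iijj)}(L)$ as a Hopf invariant via the Thom--Pontryagin construction and then to use the $0.5$-solution to force that Hopf invariant to vanish.

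First I would build the Thom--Pontryagin map. Let $\phi_k:S^3\setminus K_k\to S^1$ be the map whose regular fiber is the Seifert surface $\Sigma_k$. Because the pairwise linking numbers of $L$ vanish, $\phi_i$ and $\phi_j$ are defined on all of $S^3\setminus L$ and assemble into $(\phi_i,\phi_j):S^3\setminus L\to T^2$. Composing with the collapse $T^2\to T^2/(S^1\vee S^1)\simeq S^2$ and extending across a tubular neighborhood of $L$ by the basepoint gives $\eta:S^3\to S^2$, whose Hopf invariant is $\bar\mu_{(iijj)}(L)$, since the preimages of two distinct regular values are parallel framed copies of $\gamma=\Sigma_i\cap\Sigma_j$, the parallel being precisely the Sato--Levine pushoff off $\Sigma_i$.

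Next I would extend $\eta$ over a $4$-manifold bounding $S^3$. Take $W'=W\cup_{M_L}Z$, where $W$ is a $0.5$-solution for $L$ and $Z$ is the trace of the $0$-framed surgery on $L$, a cobordism from $S^3$ to $M_L$. Since $H^1(M_L;\mathbb{Z})\to H^1(W;\mathbb{Z})$ is an isomorphism (by $0$-solvability), the map $(\phi_i,\phi_j):M_L\to T^2$ extends to $\Phi:W\to T^2$, and composing with the collapse extends $\eta|_{M_L}$ across $W$. On $Z$, $\eta$ extends by sending each $2$-handle to the basepoint of $S^2$, since the attaching circle $K_k$ already lies in the neighborhood of $L$ that maps to the basepoint. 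The two extensions agree on $M_L$ and glue to give $\tilde\eta:W'\to S^2$.

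Finally I would compute. Letting $\omega$ be the area form on $S^2$, if $\tilde\eta^{*}\omega$ is exact on $W'$ so that $\tilde\eta^{*}\omega=d\tilde\alpha$, then Stokes yields
\[
H(\eta)=\int_{S^3}\tilde\alpha\wedge d\tilde\alpha=\int_{W'}d\tilde\alpha\wedge d\tilde\alpha=\int_{W'}\tilde\eta^{*}(\omega\wedge\omega)=0,
\]
the last equality because $\omega\wedge\omega\equiv0$ on the $2$-dimensional sphere. The required vanishing $\tilde\eta^{*}[\omega]=\mu_i^{*}\smile\mu_j^{*}=0$ in $H^2(W';\mathbb{R})$ is where $0.5$-solvability enters: this cup product pairs to zero with each Lagrangian generator $L_k$ of $H_2(W)$ because $\pi_1(L_k)\subset\pi_1(W)^{(1)}$ forces the composition $L_k\hookrightarrow W\xrightarrow{\Phi}T^2$ to be null-homotopic. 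The main obstacle of the proof will be bootstrapping this Lagrangian vanishing into full vanishing in $H^2(W';\mathbb{R})$; I expect to handle this by first surgering out the dual surfaces $D_k$ in $W$ to reduce $H_2(W)$ to the Lagrangian half (verifying that the surgery neither affects $M_L$ nor disturbs $\tilde\eta$ up to homotopy), and then invoking a Mayer--Vietoris computation for $W'=W\cup Z$ together with $H^1(Z)=0$ and the fact that in any $0$-solution $H^2(W)\to H^2(M_L)$ is zero, to transport the Lagrangian vanishing to a global vanishing on $W'$ and conclude $\bar\mu_{(iijj)}(L)=0$.
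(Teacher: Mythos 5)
Your overall strategy — realize the Sato--Levine invariant via the Thom--Pontryagin construction as a Hopf invariant, extend the map over a four-manifold built from the $0.5$-solution, and then compute inside that four-manifold — matches the spirit of the paper. But the mechanism you chose for the final computation does not work, and the proposed fixes do not close the gap.

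The Whitehead-integral argument needs $\tilde\eta^{*}[\omega]=0$ in $H^2(W';\mathbb{R})$, i.e.\ the Poincar\'e dual of the preimage surface $\tilde\eta^{-1}(p)$ (equivalently, of $F=\bar f^{-1}(1,1)$ after capping) must vanish in $H_2(W';\mathbb{R})$. That is strictly more than what $0.5$-solvability gives you. The condition $\pi_1(L_k)\subset\pi_1(W)^{(1)}$ forces the pairing of $\tilde\eta^{*}[\omega]$ with the Lagrangian generators $L_k$ to vanish, but there is no control whatsoever on its pairing with the dual generators $D_k$: the definition of a $0.5$-solution puts no $\pi_1$-restriction on the $D_k$. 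So the class $[\hat F]$ lives in the Lagrangian subspace spanned by the $L_k$'s, but it need not be zero, and when it is nonzero your Stokes computation has no primitive $\tilde\alpha$ to start from. Your bootstrapping proposal does not repair this: ``surgering out the $D_k$'' is not a well-defined operation when $D_k$ has positive genus (there is no canonical thing to reglue), and the Mayer--Vietoris reduction, even granting $H^2(W)\to H^2(M_L)=0$ and $H^1(Z)=0$, only identifies $H_2(W')$ with $H_2(W)=\langle L_k,D_k\rangle$; it does not remove the $D_k$-directions where the vanishing fails.

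The paper sidesteps exactly this: instead of trying to make the cohomology class zero, it uses that $[\hat F]$ and $[\hat F^+]$ both lie in the span of the $L_k$, which is an isotropic (Lagrangian) subspace for the hyperbolic intersection form; hence $\hat F\cdot\hat F^+=0$ without needing $[\hat F]=0$. Concretely, one writes $[\hat F]=\sum(a_kL_k+b_kD_k)$, observes $b_k=L_k\cdot\hat F=0$ from the $\pi_1$-condition, does the same for $\hat F^+$, and concludes from $L_i\cdot L_j=0$. Your proof would go through if you replaced the de Rham/Stokes step by this intersection-form argument applied to the capped preimage surfaces in the closed-up four-manifold (the paper's $\hat W$, which is also a cleaner bounding manifold than $W'=W\cup Z$). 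Two smaller points: you should pass to the two-component sublink $K_i\cup K_j$ before running the construction (otherwise you have to separately dispose of contributions from other components), and you should verify that the preimage is connected or at least treat a multi-component $\gamma$ correctly when identifying the Hopf invariant with the Sato--Levine invariant.
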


\begin{proof}
Let $J=K_i \cup K_j$ be a two-component sublink of $L$. Then, $J$ is 0.5-solvable; we wish to show that $\bar{\mu}_{(1122)}(J)=0$. Let $W$ be a 0.5-solution for $J$ and let $\hat{W}$ be the closure of $W$ obtained from attaching 0-framed 2-handles along the meridians $\mu_1$ and $\mu_2$ in $M_J = \bd W$ and then attaching a 4-handle $\B^4$. We can choose oriented Seifert surfaces $\Sigma_1$ and $\Sigma_2$ for $K_1$ and $K_2$ such that $\Sigma_1 \cap \Sigma_2 = \gamma \cong S^1$ \cite{Tim}. Then, $\bar{\mu}_{(1122)}(J) = lk(\gamma, \gamma^+)$ \cite{Tim}. Let $\hat{\Sigma}_i$ be the closure of $\Sigma_i$ in $M_J$. 

Using the Thom-Pontryagin construction, we define maps $f_1:~M_J~\rightarrow~S^1$ and $f_2:~M_J~\rightarrow~S^1$ in the following way. Let $\hat{\Sigma}_i \x [-1,1]$ be a product neighborhood of $\hat{\Sigma}_i$, where the $+1$ corresponds to the positive side of the surface $\hat{\Sigma}_i$. Define $f_i: \hat{\Sigma}_i \x [-1,1] \rightarrow S^1$ by $f_i((x,t)) = e^{\pi it}$, and for $y \in M_J - (\hat{\Sigma}_i \x [-1,1])$, we let $f_i(y)=-1$. We then consider the map $f = f_1 \x f_2: M_J \rightarrow S^1 \x S^1$. The maps $\pi_1$ and $\pi_2$ are the standard projection maps, so that $f_i = \pi_i \circ f$.  

\begin{figure}
\centering
\includegraphics[scale=.7]{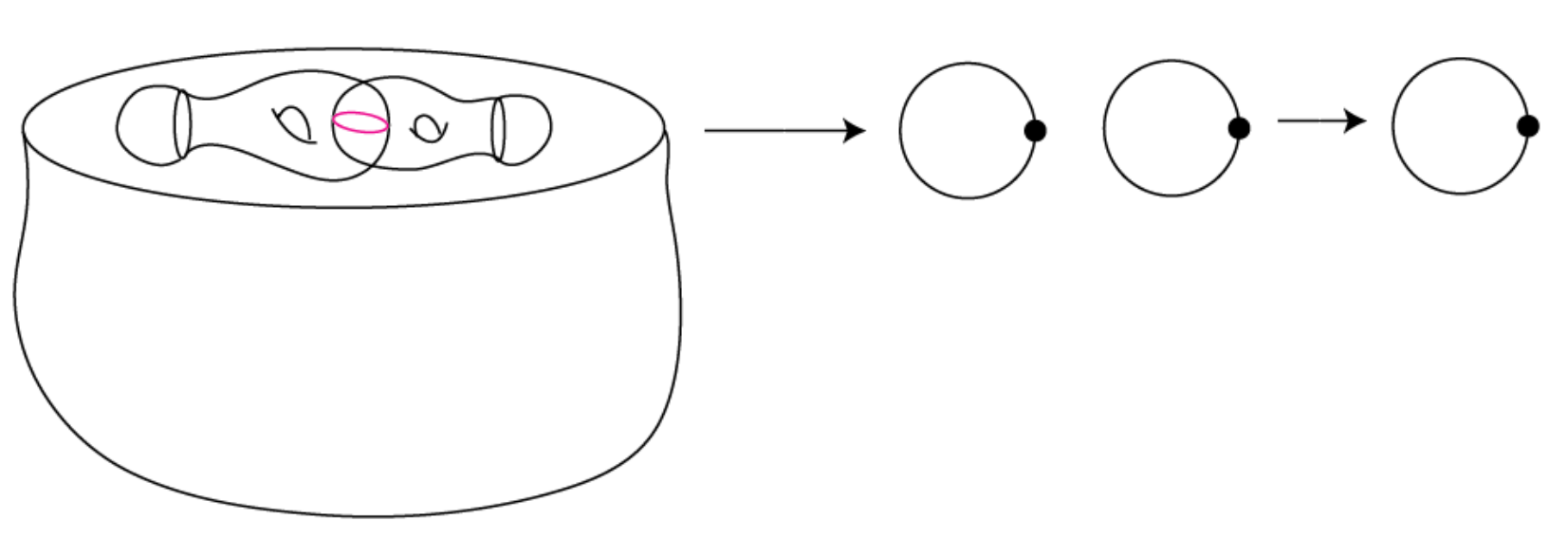}
\put(-170,77){$f$}
\put(-58,80){$\pi_i$}
\put(-270,40){$W$}
\put(-320,110){$\hat{\Sigma}_1$}
\put(-220,110){$\hat{\Sigma}_2$}
\put(-350,90){$M_J$}
\put(-110,88){$\x$}
\caption{The Thom-Pontryagin construction on $W$}
\end{figure}

The map $f:M_J \rightarrow S^1 \x S^1$ induces an isomorphism on first homology, $f_*: H_1(M_J) \rightarrow H_1(S^1 \x S^1)$. As $W$ is a 0.5-solution, the inclusion map $i_*$ is also an isomorphism. 

\[
\begin{diagram}
\node{\pi_1(M_J)} \arrow{e}
\node{H_1(M_J)} \arrow{s,lr}{\cong}{i_*} \arrow{ese,tb}{f_*}{\cong}
\\
\node{\pi_1(W)} \arrow{e}
\node{H_1(W)} \arrow[2]{e,t,3,..}{\alpha}
   \node[2]{H_1(S^1 \x S^1)}
\end{diagram}
\]

As in the above diagram, define the map $\alpha: H_1(W) \rightarrow H_1(S^1 \x S^1)$. Because $\pi_1(S^1 \x S^1) \cong \Z^2$ is abelian, we can extend $\alpha$ to the map $\bar{\alpha}: \pi_1(W) \rightarrow \pi_1(S^1 \x S^1)$, as in the following diagram. 

\[
\begin{diagram}
\node{\pi_1(W)}\arrow{e,t}{\bar{\alpha}}\arrow{s}
\node{\pi_1(S^1 \x S^1)}\arrow{s,r}{\cong}
\\
\node{H_1(W)}\arrow{e,tb}{\alpha}{\cong}
\node{H_1(S^1 \x S^1)}
\end{diagram}
\]

We wish to show the existence of an extension $\bar{f}: W \rightarrow S^1 \x S^1$ such that $\bar{f}\vert_{\bd W} = f$ and $\bar{f}_* = \bar{\alpha}$. Consider the following commutative diagram:

\[
\begin{diagram}
\node{}
\node{\pi_1(\bd W)}\arrow{sw,t}{i_*}\arrow{se,t}{f_*}\arrow[2]{s}
\\
\node{\pi_1(W)}\arrow{e,-}\arrow[2]{s} 
\node{}\arrow{e,t}{\bar{\alpha}}
\node{\pi_1(S^1 \x S^1)}\arrow[2]{s,r}{\cong}
\\
\node{}
\node{H_1(\bd W)}\arrow{sw,tb}{\cong}{i_*}\arrow{se,tb}{f_*}{\cong}
\\
\node{H_1(W)}\arrow[2]{e,tb}{\cong}{\alpha}
\node{}
\node{H_1(S^1\x S^1)}
\\
\end{diagram}
\]

Since the CW-complex $S^1 \x S^1$ is an Eilenberg-Maclane space $K(\Z^2, 1)$, we can extend the map $f$ to a map $\bar{f}:W \rightarrow S^1 \x S^1$. The pre-image $\bar{f}^{-1}_i (1) = M_i$ is a 3-submanifold of $W$ such that $\bd M_i = \hat{\Sigma}_i$. Moreover, $M_1 \cap M_2 = \bar{f}^{-1}((1,1)) = F$ is a surface $F \subset W$ such that $\bd F = \gamma$. Furthermore, the curve $\gamma$ bounds a surface $S$ in the attached 4-handle $\B^4 \subset \hat{W}$, so we let $\hat{F} = F \cup_{\gamma} S$ be a closed surface in $\hat{W}$. 

\begin{figure}[h!]
\centering
\includegraphics[scale=.4]{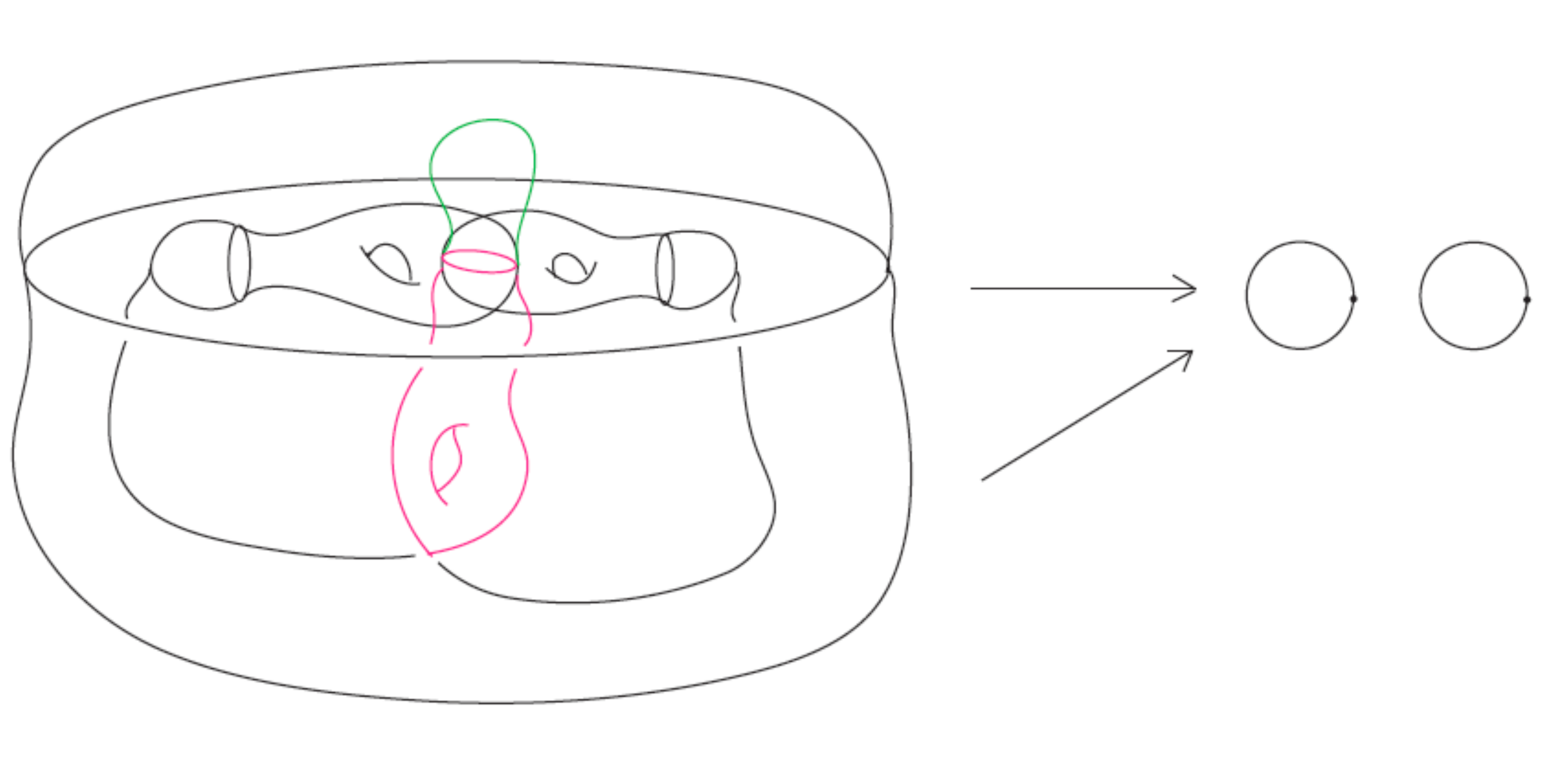}
\put(-280,125){$\hat{\Sigma_1}$}
\put(-210,125){$\hat{\Sigma_2}$}
\put(-280,50){$M_1$}
\put(-210,50){$M_2$}
\put(-250,140){$\hat{F}$}
\put(-110,110){$f$}
\put(-110,50){$\bar{f}$}
\put(-43,97){$\x$}
\caption{The surface $\hat{F}$}
\label{fig:05solv}
\end{figure}

In $S^1 \x S^1$, we consider $D_p$, an $\epsilon$-neighborhood of the point $p=(1,1)$. Let $q=(e^{2\pi i \epsilon},1)$ be a point on the boundary of $D_p$. Then, $\bar{f}^{-1}(q)$ is a surface $F^+$ in $W$ with boundary $\gamma^+$, a push-off of the curve $\gamma$ in $M_J$. The surfaces $F$ and $F^+$ are disjoint, as they map to distinct points in $S^1 \x S^1$. Furthermore, if we consider a path $\alpha (t) = (e^{2 \pi i \epsilon t},1), t \in \I$, the pre-image $\bar{f}^{-1}(\alpha (t))$ is a 3-submanifold of $W$ that is cobound by $F$ and $F^+$. We may also consider the surface $\hat{F^+} \subset \hat{W}$, a closure of $F^+$ given by attaching a surface $T$ in the 4-handle to $F^+$ along the curve $\gamma^+$. Then, $\bar{\mu}_{(1122)}(J)= lk(\gamma, \gamma^+) = \hat{F} \cdot \hat{F^+} = S \cdot T$, as surfaces $\hat{F}$ and $\hat{F^+}$ can only intersect in their caps in $\B^4$. We will use the following proposition to complete the proof.

\begin{prop}
For $\alpha$ a simple closed curve in $W$, the homology class $[\alpha] \in H_1(W)$ is given by the pair $(m_1,m_2) \in \Z \oplus \Z$ where $m_i = \alpha \cdot M_i$.
\end{prop}

\begin{proof}
$H_1(W) = \langle i_*(\mu_1),i_*(\mu_2) \rangle$ is generated by the inclusion of the meridians of the link components of $J$. Thus, the homology class $[\alpha]$ can be written as $[\alpha]=m_1[\mu_1] + m_2[\mu_2] = (m_1,m_2)$. Recalling the Thom-Pontryagin construction, $\hat{\Sigma}_i = f_i^{-1}(1)$ and $M_i = \bar{f}_i^{-1}(1)$. We then see that the intersection $\mu_i \cdot M_j = \delta_{ij}$. Therefore, $\alpha \cdot M_i  = m_i$. This says that the class $[\alpha] \in H_1(W)$ is given by the pair $(m_1,m_2)$ such that $m_i = \alpha \cdot M_i$. 
\end{proof}

Now, we consider the surfaces $\{L_i,D_i\}_{i=1}^r$ that generate $H_2(W)$, where $\pi_1(L_i) \subseteq \pi_1(W)^{(1)} = [\pi_1(W),\pi_1(W)]$. Choose the surfaces $L_i$ to be transverse to $M_1$ and $M_2$. We may assume that $\{L_i\} \subset int(W)$. For each $i$, consider the intersection $L_i \cap F = L_i \cap M_1 \cap M_2$. The intersection $L_i \cap M_1 = b_i$, where $b_i$ is some circle(s) on surface $L_i$. Because $\pi_1(L_i) \subseteq \pi_1(W)^{(1)}$, it must be true that $[b_i]=0 \in H_1(W)$. By the proposition, we then must have that $b_i \cdot M_2 = 0$ for each $i$. This tells us that $L_i \cdot \hat{F}=0$ for each $i$. We note that, as $F$ and $F^+$ cobound a 3-manifold in $W$, $L_i \cdot \hat{F}^+ = 0$ as well. 


We then can write the surfaces $\hat{F}$ and $\hat{F^+}$ in terms of the generators $\{L_i,D_i\}$ of $H_2(\hat{W})$. Let $\hat{F}= \sum_{i=1}^r{(a_iL_i+b_iD_i)}$ and let $\hat{F^+} = \sum_{i=1}^r{(a_i^+L_i + b_i^+D_i)}$. For each $i$, $0 = L_i \cdot \hat{F} = L_i \cdot \sum_{i=1}^r{(a_iL_i+b_iD_i)} = b_i $ and $0 = L_i \cdot \hat{F}^+ = L_i \cdot \sum_{i=1}^r{(a_i^+L_i + b_i^+D_i)} = b_i^+$. Thus, all of the $b_i$'s = 0 and all of the $b_i^+$'s = 0. Finally, we can conclude that $\bar{\mu}_{(1122)}(J) = \hat{F} \cdot \hat{F^+} = (\sum_{i=1}^r a_iL_i)\cdot(\sum_{i=1}^r a_i^+L_i)$. Recalling that $L_i \cdot L_j = 0$, this tells us that $\bar{\mu}_{(1122)}(J) = 0$. Therefore, if link $\link$ is 0.5-solvable, its Sato-Levine invariants $\bar{\mu}_{(iijj)}(L)$ all vanish.
\end{proof}

We now proceed with the proof of Theorem \ref{thm:SES1}.
\begin{proof}[Proof of Theorem \ref{thm:SES1}.]
 Consider the following short exact sequence,
\begin{equation}
0\rightarrow \F_0^m/\F_{0.5}^m \stackrel{f}{\rightarrow} \F_{-0.5}^m/\F_{0.5}^m \stackrel{g}{\rightarrow} \F_{-0.5}^m/\F_0^m \rightarrow 0.
\end{equation}
Suppose that this sequence does indeed split and that we have a section $s:\F^m_{-0.5}/\F^m_0 \rightarrow \F^m_{-0.5}/\F^m_{0.5}$ such that $g \circ s = id$. The first author showed in \cite{Martin} that $\F^m_{-0.5}/\F^m_0 \cong \Z^{m \choose 3}\oplus \Z_2^{m \choose 2} \oplus \Z_2^m$. We consider an element of order 2 in this group. Let $L$, as in Figure \ref{fig:WH}, be a string link whose first two components form the Whitehead link and whose remaining $m-2$ components are trivial.  Then $s(L)$ must be of order two in $\F^m_{-0.5}/\F^m_{0.5}$, and so $\bar{\mu}_{(1122)}(2s(L)) = 2 \bar{\mu}_{(1122)}(s(L)) = 0$. Thus, $\bar{\mu}_{(1122)}(s(L))=0$. However, $g(s(L))=J$, where $J$ is an $m$-component string link concordant to $L$, the Whitehead string link. However, $\bar{\mu}_{(1122)}(J) = \bar{\mu}_{(1122)}(s(L))=0$ while $\bar{\mu}_{(1122)}(L)=1$. Thus, such a section cannot exist, and hence the short exact sequence does not split.

\begin{figure}
\centering
\includegraphics[scale=.4]{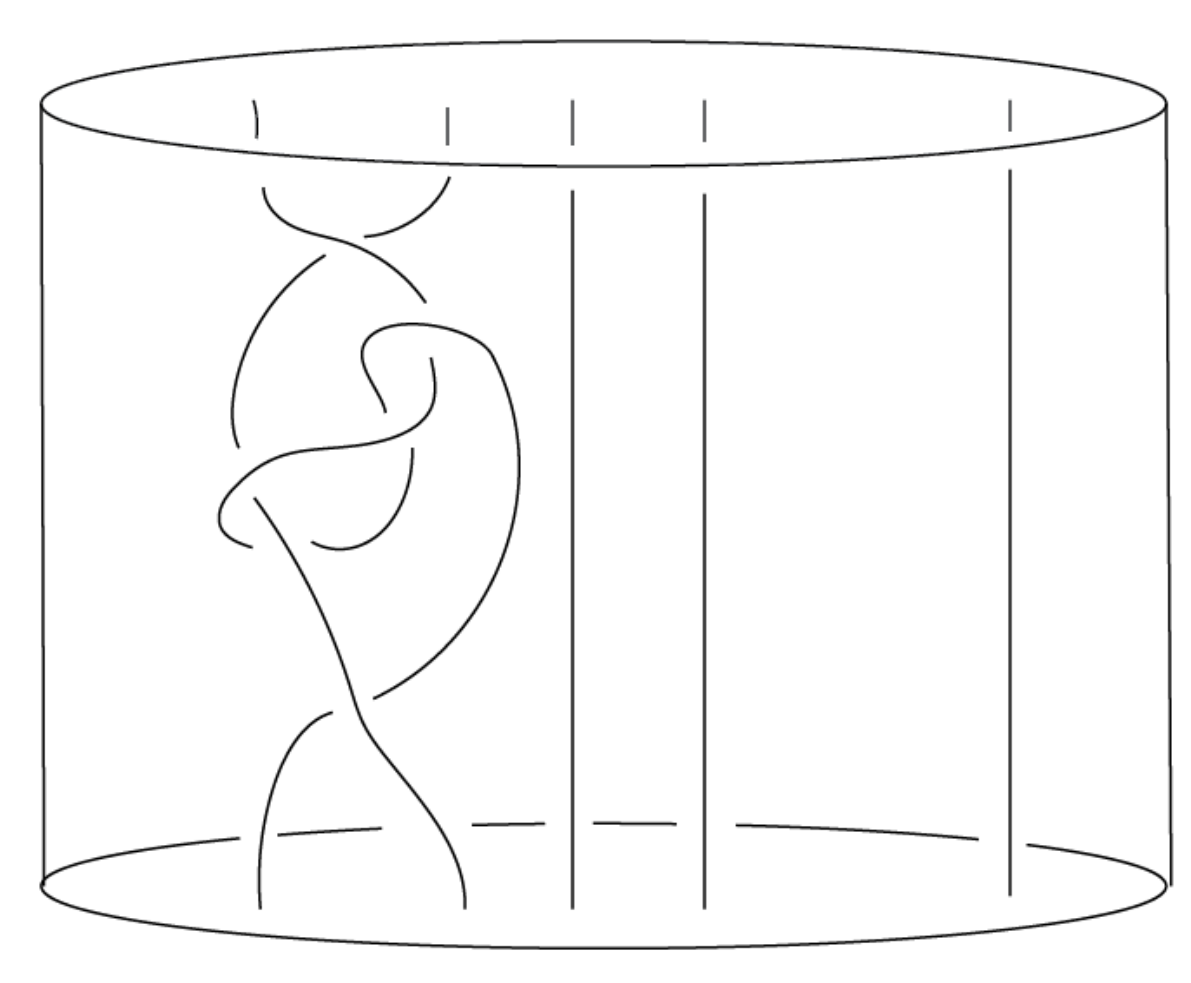}
\caption{Whitehead String Link}
\label{fig:WH}
\end{figure}
\end{proof}


\section{The sequence $0\rightarrow \F^m_{-0.5}/\F^m_{0} \rightarrow \C^m/\F^m_{0} \rightarrow \C^m/\F^m_{-0.5} \rightarrow 0$}\label{section:SES2}

We now focus our attention to the following short exact sequence
\begin{equation}\label{eq:SES2}
0\rightarrow \F^m_{-0.5}/\F^m_{0} \stackrel{f}{\rightarrow} \C^m/\F^m_{0} \stackrel{g}{\rightarrow} \C^m/\F^m_{-0.5} \rightarrow 0.
\end{equation}

First, we consider the case when $m=2$. We show that sequence (\ref{eq:SES2}) splits, allowing us to characterize the quotient $\C^2/\F^2_0$.
\begin{prop}\label{prop:SES2}
The short exact sequence $0\rightarrow \F^2_{-0.5}/\F^2_{0} \stackrel{f}{\rightarrow} \C^2/\F^2_{0} \stackrel{g}{\rightarrow} \C^2/\F^2_{-0.5} \rightarrow 0$ splits.
\end{prop}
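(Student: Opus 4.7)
The plan is to construct an explicit section, exploiting the fact that for $m=2$ the cokernel is infinite cyclic. First I would observe that $\C^2/\F^2_{-0.5} \cong \Z$: the only pairwise linking number of a $2$-component string link is $lk(K_1,K_2)$, which is a concordance invariant additive under stacking, and $\F^2_{-0.5}$ is by definition its kernel. Under this identification the quotient map $g$ becomes the linking-number homomorphism.

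To exhibit the section, let $H$ denote the positive Hopf clasp string link --- a single positive crossing between the two strands, with $lk(H)=1$ --- and define $s: \Z \to \C^2/\F^2_0$ by $s(n) := [H^n]$, where $H^n$ is the $n$-fold stack of $H$ (and $H^{-1}$ the negative clasp). Because $\Z$ is free on one generator, the assignment $1 \mapsto [H]$ uniquely determines a homomorphism regardless of the multiplicative structure of the target; in particular we do not need to know in advance whether $\C^2/\F^2_0$ is abelian. The section condition $g \circ s = id$ then reduces to $g([H]) = lk(H) = 1$, which is immediate.

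I do not expect any serious obstacle: once one notices that the cokernel is $\Z$, the splitting is essentially automatic from freeness. It is worth remarking that this is exactly the feature that fails for $m \geq 3$, where $\C^m/\F^m_{-0.5} \cong \Z^{{m \choose 2}}$ has rank larger than $1$, and any attempted section must produce ${m \choose 2}$ pairwise commuting preimages of the generators --- which is where the obstruction behind Theorem~\ref{thm:SES2nosplit} will live, and also why the subsequent proof that $\C^2/\F^2_0$ is abelian (Theorem~\ref{thm:ab}) is a separate argument rather than a consequence of this proposition.
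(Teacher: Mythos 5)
Your proof is correct and takes essentially the same approach as the paper: the paper likewise identifies $\C^2/\F^2_{-0.5}\cong\Z$ via the linking number and invokes the freeness of $\Z$ (the projective/universal property) to conclude the surjection $g$ admits a section. Your version is just slightly more explicit in naming the Hopf string link as the image of the generator, which is a nice touch but not a different argument.
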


\begin{proof}
The group $\C^m/\F_{-0.5}^m \cong \Z^{m \choose 2},$ given by the pairwise linking numbers $lk(K_i,K_j)$ for $1 \le i < j \le m$ among the components of a representative string link.  Therefore when $m=2$, $\C^2/\F^2_0 \cong \Z$.  Since the map $g$ is a surjective map onto a free group, the universal property of free groups tells us that (\ref{eq:SES2}) splits.
\end{proof}

Using the proposition above, $\C^2/\F^2_0 \cong f(\F^2_{-0.5}/\F^2_0) \ltimes s(\C^2/\F_{-0.5}^2)$ where $s$ is a section $s:\C^2/\F^2_{-0.5} \rightarrow \C^2/\F^2_{0}$ given by the splitting.  

\begin{theorem}\label{thm:ab}
The quotient $\C^2/\F^2_0 $ is abelian.
\end{theorem}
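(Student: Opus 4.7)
The plan is to use the splitting from Proposition~\ref{prop:SES2} to realize $\C^2/\F^2_0$ as the internal semidirect product $f(\F^2_{-0.5}/\F^2_0) \ltimes s(\C^2/\F^2_{-0.5})$ and then to show that the conjugation action is trivial; this forces the semidirect product to be a direct product of two abelian groups, hence abelian. By Martin's characterization \cite{Martin}, $\F^2_{-0.5}/\F^2_0 \cong \Z_2 \oplus \Z_2 \oplus \Z_2$ with the three factors detected by the Arf invariants $\text{Arf}_1$ and $\text{Arf}_2$ of the two components and by the Sato--Levine invariant $\bar{\mu}_{(1122)}$ modulo $2$, while $\C^2/\F^2_{-0.5} \cong \Z$ is generated by the class of the positive Hopf string link $H$. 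Since both factors are abelian, triviality of the conjugation action reduces to showing that $[H, a] \in \F^2_0$ for every $a \in \F^2_{-0.5}$.

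Because linking number is a homomorphism $\C^2 \to \Z$, the commutator $[H,a] = H a H^{-1} a^{-1}$ has vanishing pairwise linking number and hence lies in $\F^2_{-0.5}$, so it is enough to verify that each of the three $\Z_2$-valued invariants above vanishes on $[H,a]$. For the Arf invariants, observe that the $i$-th component of the closure of a stacked product $\sigma\tau$ is the connect sum of the $i$-th components of $\hat{\sigma}$ and $\hat{\tau}$; since the Arf invariant of a knot is a concordance invariant additive under connect sum, each $\text{Arf}_i$ descends to a group homomorphism $\C^2 \to \Z_2$. Any homomorphism into an abelian group kills all commutators, so $\text{Arf}_i([H,a]) = 0$ for $i = 1, 2$.

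The remaining and most delicate step is to prove $\bar{\mu}_{(1122)}([H,a]) \equiv 0 \pmod 2$, and this is the genuine obstacle: $\bar{\mu}_{(1122)}$ is not a homomorphism on all of $\C^2$ but only on $\F^2_{-0.5}$, so the tidy argument used for the Arf invariants is unavailable. I would choose explicit representatives of the three generators of $\F^2_{-0.5}/\F^2_0$---a trefoil tied in each component for the two Arf generators, and the Whitehead string link for the Sato--Levine generator---and in each case argue directly that $H a H^{-1} a^{-1}$ is $0$-solvable. For the trefoil generators, an ambient isotopy sliding the local trefoil past the Hopf clasp should identify $[H,a]$ with the trivial string link, which is trivially $0$-solvable. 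For the Whitehead representative, one expects that the clasps of $H$ and $H^{-1}$ cancel geometrically up to a $0$-solvable correction; making this precise will likely require constructing an explicit $4$-dimensional cobordism realizing the $0$-solution, and this construction is where the real work lies. Once all three invariants are shown to vanish on $[H,a]$ for each generator $a$, the semidirect product collapses to a direct product and the theorem follows.
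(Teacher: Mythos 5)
Your overall reduction is correct and mirrors the paper's: use the splitting of Proposition~\ref{prop:SES2} to write $\C^2/\F^2_0$ as $f(\F^2_{-0.5}/\F^2_0)\ltimes s(\C^2/\F^2_{-0.5})$, note both factors are abelian, and reduce abelianness to showing that commutators of the form $[s(T), f(J)]$ land in $\F^2_0$, i.e.\ that the three $\Z_2$-valued invariants (two Arf invariants, one Sato--Levine invariant mod $2$) characterizing $0$-solvability of $2$-component links all vanish on them. Your Arf argument is fine. But you then explicitly leave the Sato--Levine step unfinished, and this is exactly where the substance of the theorem lies: you correctly diagnose that $\bar\mu_{(1122)}$ is not a homomorphism on all of $\C^2$ (it is only additive once pairwise linking numbers vanish, and the Hopf generator $H$ has $lk=1$), and your proposed remedy --- case-by-case isotopies for the local-knot generators and an unconstructed explicit $4$-dimensional $0$-solution for $[H,W]$ with $W$ the Whitehead string link --- is not carried out. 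A proof that stops at ``this construction is where the real work lies'' has a genuine gap.

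The paper's proof closes this gap by a different maneuver: rather than evaluating $\bar\mu_{(1122)}$ directly on a commutator involving $H$, it first writes a general commutator $[P,P']$ with $P=JT_n$, $P'=J'T_{n'}$ ($J,J'\in\F^2_{-0.5}$, $T_n$ the $n$-twisted Hopf link), and then shows geometrically (Figure~\ref{fig:commutator}) that the twist blocks $T_n, T_{n'-n}, T_{-n'}$ cancel in $\C^2/\F^2_0$, so $[P,P']=[J,J']=(JJ')^2$ since $J,J'$ have order two. After this step every factor in the product has vanishing linking number, so the additivity of $\bar\mu_{(1122)}$ on $\F^2_{-0.5}$ applies and immediately gives $\bar\mu_{(1122)}((JJ')^2)=2\bar\mu_{(1122)}(J)+2\bar\mu_{(1122)}(J')\equiv 0\pmod 2$. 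In other words, the paper also needs a geometric input (commuting the Hopf twists past the $J$-boxes modulo $\F^2_0$), but it isolates that input in a single uniform cancellation argument, and then the invariant computation is a two-line additivity statement rather than a case-by-case construction of $0$-solutions. To complete your proof you would need to either carry out the $0$-solution construction for $[H,W]$ you allude to, or, better, supply the twist-cancellation argument of Figure~\ref{fig:commutator} and then invoke additivity as the paper does.
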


\begin{proof}
The quotient $\C^2/\F^2_{-0.5}\cong \Z$ is generated by $T_1$, the Hopf string link. Consider $P,P' \in \C^2/\F^2_0$. Then $P$ can be written as $P=f(J) s(T_n)$ where $J\in \F^2_{-0.5}/\F^2_{0.5}$ and $T_n\in \C^2/\F^2_{-0.5}$ is the $n$-twisted Hopf string link. Similarly, $P'=f(J')s(T_{n'})$.  We show that the commutator $[P,P']$ is trivial in $\C^2/\F^2_0$.  

From the short exact sequence (\ref{eq:SES2}), notice that $f:\F^2_{-0.5}/\F^2_0 \rightarrow \C^2/\F^2_0$ is the inclusion map, so $f(J)=J$ and $f(J')=J'$. Further,  $\F^2_{-0.5}/\F^2_{0.5} \cong \Z_2^3$, so $J$ and $J'$ are of order two \cite{Martin}. The composition $g \circ s = id$, and thus we write $s(T_n)=T_n$ and $s(T_{n'})=T_{n'}$. Then, 

\begin{align*} 
[P,P']&= f(J)s(T_n)f(J')s(T_{n'})(f(J)s(T_n))\inv (f(J')s(T_{n'}))\inv \\
&=JT_n J' T_{n'} (J T_n))\inv (J'T_{n'})\inv \\
&=J T_n J' T_{n'} T_{-n} J^{-1} T_{-n'} J'^{-1}\\
&= J T_n J' T_{n'-n} J T_{-n'} J'
\end{align*}

As seen in Figure \ref{fig:commutator}, the twisting cancels and we see that $[P,P'] = [J,J'] = (JJ')^2$. 

\begin{figure}
\centering
\includegraphics[scale=.6]{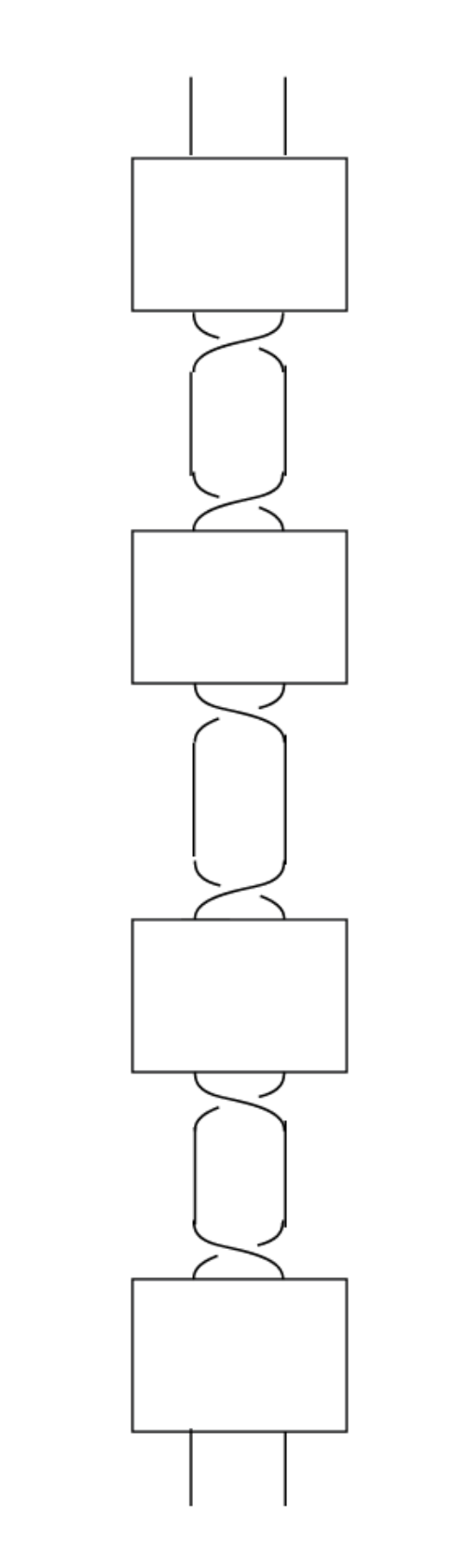}
\put(-45,245){$J$}
\put(-45,175){$J'$}
\put(-45,100){$J$}
\put(-45,35){$J'$}
\put(-20,210){$n$ full twists}
\put(-20,140){$-n+n'$ full twists}
\put(-20,70){$-n'$ full twists}
\caption{The commutator $[P,P']$}
\label{fig:commutator}
\end{figure}

This string link is 0-solvable if and only if the Arf invariant of each component is 0 and $\bar \mu _{(1122)}(L) \equiv 0$ mod 2 \cite{Martin}.  The Arf invariant is additive under the stacking operation, so the Arf invariants of any commutator are 0. Here, the Sato-Levine invariant is additive under stacking because it is the first non-vanishing Milnor's invariant. Thus, $\bar{\mu}_{(1122)}([P,P']) = \bar \mu_{(1122)}(J)+\bar \mu_{(1122)}(J')+\bar \mu_{(1122)}(J)+\bar \mu_{(1122)}(J')  \equiv 0 \bmod 2$. Hence, $[P,P']$ is 0-solvable and trivial in $\C^2/\F^2_0$.

\end{proof}

This gives the following corollary characterizing $\C^2/\F^2_0$.

\begin{corollary}\label{corollary:structure}
The quotient $\C^2/\F^2_0 \cong \Z_2 \oplus \Z_2 \oplus \Z_2 \oplus \Z$.
\end{corollary}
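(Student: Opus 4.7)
The plan is to assemble three ingredients already established in this section: the splitting of sequence (\ref{eq:SES2}) for $m=2$ from Proposition \ref{prop:SES2}, the commutativity of $\C^2/\F^2_0$ from Theorem \ref{thm:ab}, and the explicit computation of $\F^m_{-0.5}/\F^m_0$ from the first author's paper \cite{Martin} that is invoked in the proof of Theorem \ref{thm:SES1}.

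First, by Proposition \ref{prop:SES2}, the short exact sequence
\begin{equation*}
0\rightarrow \F^2_{-0.5}/\F^2_{0} \stackrel{f}{\rightarrow} \C^2/\F^2_{0} \stackrel{g}{\rightarrow} \C^2/\F^2_{-0.5} \rightarrow 0
\end{equation*}
splits, so a priori $\C^2/\F^2_0 \cong (\F^2_{-0.5}/\F^2_0) \rtimes (\C^2/\F^2_{-0.5})$. By Theorem \ref{thm:ab}, the group $\C^2/\F^2_0$ is abelian, so this semidirect product must in fact be a direct product, giving $\C^2/\F^2_0 \cong (\F^2_{-0.5}/\F^2_0) \oplus (\C^2/\F^2_{-0.5})$.

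Next I would identify each factor. From the proof of Proposition \ref{prop:SES2}, the pairwise linking number provides an isomorphism $\C^2/\F^2_{-0.5} \cong \Z$. For the other factor, I would specialize the first author's isomorphism $\F^m_{-0.5}/\F^m_0 \cong \Z^{\binom{m}{3}} \oplus \Z_2^{\binom{m}{2}} \oplus \Z_2^m$ (cited in the proof of Theorem \ref{thm:SES1}) to $m=2$. Since $\binom{2}{3}=0$, $\binom{2}{2}=1$, and $m=2$, this yields $\F^2_{-0.5}/\F^2_0 \cong \Z_2 \oplus \Z_2 \oplus \Z_2$, where the three generators correspond to the Sato-Levine invariant $\bar{\mu}_{(1122)}$ mod $2$ and the Arf invariants of the two components.

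Combining these identifications gives $\C^2/\F^2_0 \cong \Z_2 \oplus \Z_2 \oplus \Z_2 \oplus \Z$, as desired. The only potential subtlety is making sure that abelianness really does upgrade a split short exact sequence of groups to a direct sum decomposition, but since both the kernel and the quotient embed as (abelian) subgroups of an abelian ambient group via $f$ and the section $s$, this is automatic. Thus the argument is essentially a bookkeeping exercise, with all of the substantive work already carried out in Lemma \ref{05solvable}, Proposition \ref{prop:SES2}, Theorem \ref{thm:ab}, and \cite{Martin}.
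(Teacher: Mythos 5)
Your proof is correct and follows essentially the same route as the paper: split short exact sequence (Proposition \ref{prop:SES2}) plus abelianness (Theorem \ref{thm:ab}) forces a direct sum, and the factors are identified as $\F^2_{-0.5}/\F^2_0 \cong \Z_2^3$ (specializing the formula from \cite{Martin}) and $\C^2/\F^2_{-0.5}\cong\Z$. You simply spell out the bookkeeping (the $\binom{m}{3},\binom{m}{2},m$ specialization and the upgrade from semidirect to direct product) that the paper leaves implicit.
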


\begin{proof}
From Theorem \ref{thm:ab},  $\C^2/\F^2_0\cong f(\F^2_{-0.5}/\F^2_0) \x s(\C^2/\F_{-0.5}^2)$, where $f(\F^2_{-0.5}/\F^2_0) \cong \Z_2 \oplus \Z_2 \oplus \Z_2$ and $s(\C^2/\F_{-0.5}^2) \cong \Z$.
\end{proof}

However, for $m \ge 3$, $\C^m/\F^m_0$ is non abelian, as the Borromean Rings is an example of a nontrivial commutator in $\C^3/\F^3_0$ \cite{Otto}.  Furthermore, the sequence (\ref{eq:SES2}) does not split, as shown in the next theorem.

\begin{theorem}\label{thm:SES2nosplit}
For $m \ge 3$, the short exact sequence $0\rightarrow \F^m_{-0.5}/\F^m_{0}
{\rightarrow} \C^m/\F^m_{0} \stackrel{g}
{\rightarrow} \C^m/\F^m_{-0.5} \rightarrow 0$ does not split.
\end{theorem}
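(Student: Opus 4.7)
The plan is to argue by contradiction: suppose a section $s: \C^m/\F^m_{-0.5} \to \C^m/\F^m_0$ exists, and derive a contradiction by exhibiting a commutator whose triple Milnor invariant must simultaneously vanish (because it lies in $\F^m_0$) and be nonzero. Since $\C^m/\F^m_{-0.5} \cong \Z^{\binom{m}{2}}$ is free abelian, the image $s(\C^m/\F^m_{-0.5}) \subset \C^m/\F^m_0$ would be abelian. For $m \geq 3$ I would single out three components, say $1, 2, 3$, and consider the standard generators $e_{12}, e_{23}$ of $\C^m/\F^m_{-0.5}$ (unit linking between the indicated components, zero elsewhere). Setting $\alpha = s(e_{12})$ and $\beta = s(e_{23})$, commutativity forces $[\alpha, \beta] = 0$ in $\C^m/\F^m_0$, so any string link lifts $\tilde\alpha, \tilde\beta \in \C^m$ with $lk(\tilde\alpha) = e_{12}$ and $lk(\tilde\beta) = e_{23}$ satisfy $[\tilde\alpha, \tilde\beta] \in \F^m_0$.

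The core of the argument is to show that $\bar{\mu}_{(123)}([\tilde\alpha, \tilde\beta]) = \pm 1$ regardless of the choice of lifts. Writing $\tilde\alpha = H_{12}\, a$ and $\tilde\beta = H_{23}\, b$ with $a, b \in \F^m_{-0.5}$, where $H_{ij}$ denotes the Hopf string link between strands $i,j$, and iteratively applying the identities $[xy, z] = {}^x[y, z] \cdot [x, z]$ and $[x, yz] = [x, y] \cdot {}^y[x, z]$, one expands
\[
[\tilde\alpha, \tilde\beta] = {}^{H_{12}}[a, H_{23}] \cdot {}^{H_{12}H_{23}}[a, b] \cdot [H_{12}, H_{23}] \cdot {}^{H_{23}}[H_{12}, b].
\]
Each factor is a commutator and hence lies in $\F^m_{-0.5}$. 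On this subgroup, $\bar{\mu}_{(123)}$ is additive under stacking (being the first potentially nonvanishing Milnor invariant) and invariant under conjugation (since the closures of $\tau \sigma \tau^{-1}$ and $\sigma$ agree in $S^3$, and on $\F^m_{-0.5}$ the length-three Milnor invariant has no indeterminacy). Combining these two properties, each correction factor contributes $0$; for example,
\[
\bar{\mu}_{(123)}([a, H_{23}]) = \bar{\mu}_{(123)}(a) + \bar{\mu}_{(123)}(H_{23} a^{-1} H_{23}^{-1}) = \bar{\mu}_{(123)}(a) + \bar{\mu}_{(123)}(a^{-1}) = 0.
\]
This reduces the computation to the base case $\bar{\mu}_{(123)}([H_{12}, H_{23}])$, which is evaluated directly by computing the longitudes of the stacked string link via the Magnus expansion in the free group on meridians; a careful accounting of the change-of-basepoint conjugations shows the relevant coefficient is $\pm 1$.

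Since the first author's identification $\F^m_{-0.5}/\F^m_0 \cong \Z^{\binom{m}{3}} \oplus \Z_2^{\binom{m}{2}} \oplus \Z_2^m$ sends $\bar{\mu}_{(123)}$ onto one of the $\Z$-summands, the conclusion $\bar{\mu}_{(123)}([\tilde\alpha, \tilde\beta]) = \pm 1$ implies $[\tilde\alpha, \tilde\beta] \notin \F^m_0$, contradicting the splitting. The step I expect to require the most care is the justification that conjugation by $H_{12}$ or $H_{23}$, elements lying outside $\F^m_{-0.5}$, nonetheless preserves $\bar{\mu}_{(123)}$ of the commutator factors. This rests on the geometric fact that $\widehat{\tau \sigma \tau^{-1}}$ is isotopic to $\hat\sigma$ in $S^3$ (by sliding $\tau^{-1}$ around the closure arcs) combined with the vanishing of indeterminacy for the triple linking number on the class of links with trivial pairwise linking, and this is where the proof requires the most delicate handling of string-link versus link invariants.
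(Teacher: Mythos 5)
Your proposal follows essentially the same route as the paper's proof: assume a section exists, note the image must be abelian so the commutators $[\tilde\alpha,\tilde\beta]$ are $0$-solvable, decompose each lift as a Hopf string link (the paper's braid generator $A_{ij}$) times an element of $\F^m_{-0.5}$, expand the commutator, kill the correction terms using additivity of $\bar\mu_{(123)}$ on links with vanishing pairwise linking together with conjugation-invariance via closures, and reduce to $\bar\mu_{(123)}([H_{12},H_{23}])=\pm 1$ (the Borromean rings), contradicting the vanishing of triple linking numbers for $0$-solvable links. The minor differences (you isolate the single pair $e_{12}, e_{23}$ rather than arguing for all $L_{ij},L_{rt}$, and you state the Hall--Witt identities explicitly) are cosmetic, and you have correctly identified the genuinely delicate point — the conjugation step — which the paper handles exactly as you describe.
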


\begin{proof}
Suppose, to the contrary, that the short exact sequence (\ref{eq:SES2}) does split, and that there exists a section $s:\C^m/\F^m_{-0.5} \rightarrow \C^m/\F^m_0$ such that $g \circ s = id$. The quotient $\C^m/\F^m_{-0.5}$ is generated by $m \choose 2$ string links $S_{ij} = K_1 \cup \dots \cup K_m$ in which $lk(K_i, K_j) = 1$ and $lk(K_r,K_t) = 0$ for all $r,t \ne i,j$. 

Consider the image of these generators under the section. Let $L_{ij} = s(S_{ij}) \in \C^m/\F^m_0$. Necessarily, for each $i,j \ne r,t$, the commutators $[L_{ij},L_{rt}]$ are 0-solvable string links. Furthermore, from the splitting, we can write each $L_{ij} = K_{ij}A_{ij}$, where $K_{ij}$ is a string link with vanishing pairwise linking numbers and $A_{ij}$ is the standard $ij^{th}$ braid group generator. 

Then, we have that 
\begin{align*}
[L_{ij},L_{rt}]& =[K_{ij}A_{ij},K_{rt}A_{rt}]\\
& = [K_{ij},K_{rt}][K_{ij}A_{rt}]^{c_1}[A_{ij},K_{rt}]^{c_2}[A_{ij},A_{rt}]^{c_3},
\end{align*}  
where the $c_k$ denotes a conjugate of the commutator. As $[L_{ij},L_{rt}]$ is 0-solvable, we must have that for all $1 \le x <y<z \le m$, the triple linking numbers  $\bar{\mu}_{(xyz)}([L_{ij},L_{rt}]) = 0$. However, we can compute these Milnor's invariants to arrive at a contradiction. First, note that since the pairwise linking numbers of both $K_{ij}$ and $K_{rt}$ vanish,
\begin{align*}
\bar{\mu}_{(xyz)}([K_{ij},K_{rt}])&= \bar{\mu}_{(xyz)}(K_{ij}) + \bar{\mu}_{(xyz)}(K_{rt}) +\bar{\mu}_{(xyz)}(K_{ij}\inv ) + \bar{\mu}_{(xyz)}(K_{rt}\inv)\\
&= \bar{\mu}_{(xyz)}(K_{ij}) + \bar{\mu}_{(xyz)}(K_{rt}) -\bar{\mu}_{(xyz)}(K_{ij}) - \bar{\mu}_{(xyz)}(K_{rt})\\
&=0
\end{align*}

Next, we consider 
\begin{align*}
\bar{\mu}_{(xyz)}([K_{ij},A_{rt}]^{c_1})& = \bar{\mu}_{(xyz)}([K_{ij},A_{rt}])\\
&=\bar{\mu}_{(xyz)}(K_{ij}(A_{rt}K_{ij}\inv A_{rt}\inv))\\
&=\bar{\mu}_{(xyz)}(K_{ij})+\bar{\mu}_{(xyz)}(A_{rt}K_{ij}\inv A_{rt}\inv)
\end{align*}
as both of the string links $K_{ij}$ and $A_{rt}K_{ij}\inv A_{rt}\inv$ have vanishing pairwise linking numbers. However, the triple linking number of the string link $A_{rt}K_{ij}\inv A_{rt}\inv$ equals that of its closure;  \begin{align*}
\bar{\mu}_{(xyz)}(A_{rt}K_{ij}\inv A_{rt}\inv)&=\bar{\mu}_{(xyz)}(\widehat{A_{rt}K_{ij}\inv A_{rt}\inv})\\
&=\bar{\mu}_{(xyz)}(K_{ij}\inv) \\
&= -\bar{\mu}_{(xyz)}(K_{ij}).
\end{align*}
Therefore, $\bar{\mu}_{(xyz)}([K_{ij},A_{rt}]^{c_1})=\bar{\mu}_{(xyz)}(K_{ij})-\bar{\mu}_{(xyz)}(K_{ij})=0$. A similar argument shows that $\bar{\mu}_{(xyz)}([A_{ij},K_{rt}]^{c_2})=0$.

Thus, we see that $\bar{\mu}_{(xyz)}([L_{ij},L_{rt}]) = \bar{\mu}_{(xyz)}([A_{ij},A_{rt}])$. However, we cannot guarantee that $\bar{\mu}_{(xyz)}([A_{ij},A_{rt}])=0$. Note that the Borromean rings can be written as a commutator $[A_{12},A_{23}]$ in the braid group, and $\bar{\mu}_{(123)}([A_{12},A_{23}])= 1$. Thus, we cannot find such links $L_{ij}$ such that $[L_{ij},L_{rt}]$ is 0-solvable for all $i,j \ne r,t$. Thus, such a section $s:\C^m/\F^m_{-0.5} \rightarrow \C^m/\F^m_0$ does not exist and the short exact sequence (\ref{eq:SES2}) does not split.

\end{proof}

\bibliographystyle{amsalpha}
\bibliography{bibfile}

\begin{thebibliography}{12}



\bibitem{Casson}
A. ~Casson. \emph{Link cobordism and Milnor's invariant}, Bull. London Math. Soc. 7 (1975) 39-40.

\bibitem{Cha}
J. ~Cha. \emph{Link concordance, homology cobordism, and Hirzebruch-type intersection form defects from iterated p-covers.} J. Eur. Math. Soc. 12 (2010), 555-610.


\bibitem{Tim}
T. ~Cochran. \emph{Derivatives of links: Milnor's concordance invariants and Massey's products.} American Mathematical Society, Providence, RI, 1990.

\bibitem{CH}
T. ~Cochran and S. ~Harvey. \emph{Homology and derived series of groups II: Dwyer's theorem.} Geom. Topol., 12 (2008), 199-232.

\bibitem{CHL}
T. ~Cochran, S. ~Harvey, and C. ~Leidy. \emph{Knot concordance and higher-order Blanchfield duality.} Geom. Topol., 13 (2009), 1419-1482.

\bibitem{COT}
T. ~Cochran, K. ~Orr, and P. ~Teichner. \emph{Knot concordance, Whitney towers, and $L^2$-signatures}. Ann. of Math (2), 157 (2003), 433-519.




\bibitem{HL1}
N. ~Habegger and X. ~Lin. \emph{The classification of links up to link-homotopy}, J. Amer. Math. Soc. 3 (1990) 389-419.

\bibitem{H}
S. ~Harvey. \emph{Homology cobordism invariants and the Cochran-Orr-Teichner filtration of the link concordance group.} Geom. Topol., 12 (2008), 387-430.




\bibitem{Martin}
T. ~Martin. \emph{Classification of Links up to 0-solvability}

\bibitem{Mil1}
J. ~Milnor. \emph{Link groups}. Ann. of Math. (2), 59 (1954), 177-195. 

\bibitem{Mil2}
J. ~Milnor, \emph{Isotopy of links,} Princeton University Press, Princeton, NJ, 1957.


\bibitem{Otto}
C. ~Otto, \emph{Milnor's invariants and the Cochran-Orr-Teichner filtration of the link concordance group.} Algebraic \& Geometric Topology, (14), 2014, 2627–2654. 






\end{thebibliography}

\end{document}